\documentclass{article}

\usepackage{amsfonts}
\usepackage{amsthm}
\usepackage{amsmath}
\usepackage{amssymb}
\usepackage{mathtools} 
\usepackage{authblk} 
\usepackage{xcolor}
\usepackage{soul} 
\usepackage{mathrsfs}
\usepackage{cite}
\usepackage[top=1in, bottom=1in, left=1in, right=1in]{geometry}
\newcommand{\norm}[1]{\left \lVert#1\right \rVert}
\newtheorem{theorem}{Theorem}[section]
\newtheorem{lemma}{Lemma}[section]

\newtheorem{assumption}{Assumption}[section]
\theoremstyle{definition}
\newtheorem{definition}{Definition}[section]

\theoremstyle{remark}
\newtheorem{remark}{Remark}[section]

\numberwithin{equation}{section}

\begin{document}
	
	
	\title{Some results on backward stochastic differential equations of fractional order}
	
	\date{}
	
    \author[1]{Nazim I. Mahmudov \thanks{Corresponding author. Email: \texttt{nazim.mahmudov@emu.edu.tr}}}
	\author[1,2]{Arzu Ahmadova\thanks{Email: \texttt{arzu.ahmadova@uni-due.de}}}
	

	\affil[1] {Department of Mathematics, Eastern Mediterranean University, Northern Cyprus, Mersin 10, 99628, Turkey}
	\affil[2] {Faculty of Mathematics, University of Duisburg-Essen, Essen, 45127, Germany}
	
	
	\maketitle
	
	
	\begin{abstract}
		\noindent In this article, we deal with fractional stochastic differential equations, so-called Caputo type fractional backward stochastic differential equations (Caputo fBSDEs, for short), and study the well-posedness of an adapted solution to Caputo fBSDEs of order $\alpha \in (\frac{1}{2},1)$ whose coefficients satisfy a Lipschitz condition. A novelty of the article is that we introduce a new weighted norm in the square integrable measurable function space that is useful for proving a fundamental lemma and its well-posedness. For this class of systems, we then show the coincidence between the notion of stochastic Volterra integral equation and the mild solution.\\
		
		\textit{Keywords:}
		Fractional backward stochastic differential equations, backward stochastic nonlinear Volterra integral equation, well-posedness, adapted process, weighted norm
	\end{abstract}
	
	\section{Introduction}\label{Sec:intro}
	
	
	The study of backward stochastic differential equations (BSDEs) has important applications in stochastic optimal control, stochastic differential games, probabilistic formulae for the solutions of quasilinear partial differential equations, and financial markets. The adapted solution for a linear BSDE which arises as the adjoint process for a stochastic control problem was first investigated by Bismut \cite{bismut} in 1973, then by Bensousssan \cite{bensoussan}, while  Pardoux and Peng \cite{pardoux-peng-1990} first studied the result for the existence and uniqueness of an adapted solution to a continuous general non-linear BSDE  which is a terminal value problem for an It\^{o} type stochastic differential equation under uniform Lipschitz conditions of the following form:
	\begin{align*}
	\begin{cases*}
	\mathrm{d}Y(t)=h(t,Y(t),Z(t))\mathrm{d}t + Z(t)\mathrm{d}W(t), t \in [0,T],\\
	Y(T)=\xi.
	\end{cases*}
	\end{align*}
They established existence and uniqueness of an adapted solution using Bihari's inequality which is the most important generalization of the Gronwall-Bellman inequality.

	 Since then, the theory of BSDE became a powerful tool in many fields such as mathematical finance, optimal control, semi-linear and quasi-linear partial differential equations \cite{hu,rong-1997,rong,tessitore}. Later Peng and Pardoux developed the theory and applications of continuous BSDEs in a series of articles \cite{pardoux-peng-1990,pardoux-1999,peng-1993,peng-1992,peng-1991} under the assumption that coefficients satisfy Lipschitz conditions. Tang and Li \cite{tang-li} then applied the idea of Peng \cite{pardoux-peng-1990} to get the first result on the existence of an adapted solution to a BSDE with Poisson jumps for a fixed terminal time and with Lipschitz conditions. Moreover, Mao \cite{mao} obtained a more general result than that of Pardoux and Peng \cite{pardoux-peng-1990}: he proved existence and uniqueness results under mild assumptions applying Bihari's inequality which was the key tool in the proof.

     Among the many scientific articles on backward stochastic differential equations, we will mention only a few with comparison and relation that motivate this work:  
	 \begin{itemize}
	 	\item  Lin \cite{Lin} considered the following backward stochastic nonlinear Volterra integral equation
	 	\begin{equation*}
	 		X(t)+\int_{t}^{T}f(t,s,X(s),Z(t,s))\mathrm{d}s+\int_{t}^{T}\left[ g(t,s,X(s))+Z(t,s)\right] \mathrm{d}W(s)=X.
	 	\end{equation*}
	 	His aim in \cite{Lin} was to look for a pair $\left\lbrace X(s),Z(t,s)\right\rbrace $ such that this pair is $\left\lbrace \mathcal{F}_{t \vee s}\right\rbrace$-adapted and $Z(t,s)$ is related to $t$. This is one of the common points of our results on linear Caputo fBSDEs with \cite{Lin}. The author also defines $Z(t,s)=\tilde{Z}(t,s)-g(t,s)$, $(t,s)\in \mathcal{D}=\left\lbrace  (t,s)\in \mathbb{R}^{2}_{+}; 0\leq t\leq s \leq T\right\rbrace $ which we have defined for linear Caputo fBSDEs. Another main point of intersection with \cite{Lin,mahmudov,mahmudov2,yong,wang-yong,shi-wang} is to use the well-known extended martingale representation theorem in which we consider extended martingale representation to prove well-posedness of linear Caputo fBSDEs as well.
	 	 \item Mahmudov and Mckibben \cite{mahmudov} studied the existence and uniqueness of the adapted solution to a backward stochastic evolution equation in Hilbert space in the following form:
	 	\begin{align*}
	 		\begin{cases*}
	 			\mathrm{d}y(t)= -[Ay(t)+F(t,y(t),z(t))]\mathrm{d}t-[G(t,y(t))+z(t)]\mathrm{d}w(t),\\
	 			y(T)=\xi,
	 		\end{cases*}
	 	\end{align*}
	 	where $A$ is a linear operator which generates a $C_{0}$-semigroup. The authors also applied Bihari's inequality to show existence and uniqueness under a non-Lipschitz condition. The main idea of this work is to establish a fundamental lemma which plays an important role in the theory of BSDEs. Furthermore, they discussed a stochastic maximum principle for optimal control problems in Hilbert space.  
	 	\item  In comparison with our results Shi et al. studied in \cite{shi} the well-posedness of backward doubly stochastic Volterra integral equations (BDSVIEs) in the terms of introduced M-solutions together with Pontryagin type maximum principle for an optimal control problem of FDSVIEs by virtue of the duality principle.
	 	\item Yong \cite{yong} introduced a backward stochastic Volterra integral equation (BSVIEs) in the following form:
	 	\begin{equation*}
	 		Y(t)=f(t)-\int_{t}^{T}h(t,s,Y(s),Z(t,s), Z(s,t))\mathrm{d}s-\int_{t}^{T}Z(t,s)\mathrm{d}W(s), \quad t\in [0,T].
	 	\end{equation*} 
	 	The interesting features are that $Z(t,s)$ depends on $t$ and the drift depends on both $Z(t,s)$ and  $Z(s,t)$ in general. Moreover, he studied the well-posedness and regularity of adapted M-solutions of BSVIEs in his another article \cite{Yong}. For the reader's convenience, it should be noted that the results in \cite{Yong} differ from what we establish in this article. First, in Theorem 3.7 in \cite{Yong} the assumptions about $f$ and $g$ are different, and second, the Lipschitz coefficients are functions that depend on $t$ and $s$, while we assume here that they are positive constants.
	 	\item Recently, Wang and Yong \cite{wang-yong} considered BSVIEs. By taking a look at special case of BSVIEs, they reduced the given BSVIEs to the following form:
	 	\begin{align}\label{eq:1}
	 		Y(t)=\xi + \int_{t}^{T}g(s,Y(s),Z(t,s))\mathrm{d}s-\int_{t}^{T}Z(t,s)\mathrm{d}W(s), \quad t\in [0,T].
	 	\end{align}
	 	The unknown pair that they are looking for is $\left( Y (\cdot), Z(\cdot,\cdot)\right)$ of stochastic processes. Eq. \eqref{eq:1} is comparable with the integral form of BSDE which takes the following form:	
	 	\begin{equation}\label{eq:2}
	 		Y(t)=\xi + \int_{t}^{T}g(s,Y(s),Z(s))\mathrm{d}s-\int_{t}^{T}Z(s)\mathrm{d}W(s), \quad t\in [0,T].
	 	\end{equation} 
	 	If \eqref{eq:2} has a unique $\mathcal{F}_{t}$-adapted solution $(Y(\cdot), Z(\cdot))$ , then this solution is also an adapted solution of \eqref{eq:1} with $Z(t,s)= Z(s)$ and it turns out that BSVIE can be regarded as an extension of BSDE.	
	 \end{itemize}
	  
 
	We study a new fractional analogue of BSDEs which is an untreated topic in the present literature, and so-called \textit{Caputo fractional backward stochastic differential equations} of order $\alpha\in (\frac{1}{2},1)$ on $[0,T]$ as follows:
	\begin{equation}\label{fbsde}
	\begin{cases}
	\left( \prescript{C}{t}{D^{\alpha}_{T}}x\right) (t)=Ax(t)-f(t,x(t),y(t,s))-\left[g(t,s,x(t))+y(t,s) \right] \frac{\mathrm{d}w(t)}{\mathrm{d}t},\\
	x(T)=\xi.
	\end{cases}
	\end{equation}
	where $A$ is a $n\times n$ constant matrix and  $(w(t))_{t\geq 0} $ denote a standard $m$-dimensional  Brownian motion defined on a complete probability space $(\Omega,\mathcal{F}, \mathbb{F}, \mathbb{P})$ with natural filtration $\mathbb{F} \coloneqq \left\lbrace \mathcal{F}_{t}\right\rbrace_{t \geq 0}$ satisfying \textit{usual conditions}. Let $\Omega$ be a sample space, $x(t,\omega)=x(t)$ be a stochastic process on $ [0,T]$ and $\omega\in \Omega$, let $\mathcal{D}=\left\lbrace  (t,s)\in \mathbb{R}^{2}_{+}; 0\leq t\leq s \leq T\right\rbrace $. Moreover, let $f: [0,T]\times \mathbb{R}^{n} \times  \mathbb{R}^{n\times m} \to \mathbb{R}^{n}$ and $g: \mathcal{D}\times \mathbb{R}^{n} \to \mathbb{R}^{n\times m}$ be measurable mappings, and let the terminal value $\xi \in L^{2}(\Omega, \mathcal{F}_{T},\mathbb{P})$ be an $\mathcal{F}_{T}$-measurable $\mathbb{R}^{n}$-valued random variable such that $\textbf{E}\|\xi\|^{2}< \infty$. 
	
	A natural question arises here that why we consider the fractional order $\alpha$ to be in $(\frac{1}{2},1)$. The reason is that in the theory of fractional stochastic differential equations, the fractional order $\alpha\in (\frac{1}{2},1)$ is related to the properties of gamma function. Thus the gamma function is meromorphic on all of $\mathbb{C}$ with simple poles only at the points $0, -1, -2, -3, \ldots$. It is interesting to note that the inverse gamma function $\frac{1}{\Gamma(x)}$ is an entire function: holomorphic for all $x\in \mathbb{C}$. The poles of $\Gamma$ become zeros of this function, and $\Gamma$ has no zeros. Since $\frac{1}{\Gamma(2\alpha-1)}$ appears in our calculations, we must consider the fractional order $\alpha$ as lying in $(\frac{1}{2},1)$ throughout this article.
	
	Our aim in this article is to look for a pair of stochastic processes $\left\lbrace \left( x(t),y(t,s)\right) ; (t,s)\in \mathcal{D}\right\rbrace $ with values $\mathbb{R}^{n}\times \mathbb{R}^{n\times m}$ which we require to be $\mathcal{F}_{t\vee s}$ -adapted and to satisfy \eqref{fbsde} in the usual It\^{o}'s sense. Such a pair is called an adapted solution of the equation \eqref{fbsde}. A novelty of the article is that we introduce a new weighted norm in the square integrable measurable function space that is useful for proving a fundamental lemma and its well-posedness for an adapted pair $\left\lbrace \left( x(t),y(t,s)\right); (t,s)\in \mathcal{D}\right\rbrace $ which solves \eqref{fbsde}. We first derive representations of the adapted solution and then we present that the operator is well-defined before proving a contraction mapping principle.
	
	Therefore, the plan of this article is organized as below: In Section \ref{Sec:prel} we introduce main definitions from stochastic and fractional calculus, and some necessary inequalities from stochastic calculus which play a key role in certain estimations of the main results. Section \ref{stochastic} is devoted to proving global existence and uniqueness  of the adapted solution to Caputo fBSDE \eqref{fbsde} of order  $\alpha \in (\frac{1}{2},1)$ using the Banach's fixed point approach. Section \ref{coincidence} is dedicated to presenting the coincidence between the notion of integral equation and mild solution. Finally, Section \ref{open problems} is for conclusion and introducing some open problems.
	
	\subsection{Mathematical description}
	To conclude the introductory section, we introduce some notations which will be used throughout this article. Let $\mathbb{R}^{n}$ be endowed with the standard norm and $\langle x, \bar{x} \rangle$ denote the inner product of $x, \bar{x} \in \mathbb{R}^{n}$. An element $y\in \mathbb{R}^{n\times m}$ will be considered as a $n\times m$ matrix and its norm is defined by $\|y\|=\sqrt{\text{tr}( y y^{T})}$ and $(x,y)=\text{tr}(xy^{*})$. For any given $0\leq t\leq T$, we denote by $L^{2}_{\mathcal{F}}(0,T;\mathbb{R}^{n})$ (esp. $L^{2}_{\mathcal{F}}(\mathcal{D};\mathbb{R}^{n\times m})$) the family of $\mathbb{R}^{n}$-valued (resp. $\mathbb{R}^{n\times m}$-valued )  $\mathcal{F}_{t}$-adapted processes (resp. $\mathcal{F}_{t\vee s}$)  which are measurable and square integrable on $\Omega\times [0,T]$ with respect to $\mathbb{P} \times \lambda$ where $\lambda$ denotes the Lebesgue measure on $[0,T]$.
	\begin{definition}
		Let $\frac{1}{2}< \alpha < 1$ and $1 \leq p < \infty$ be fixed. We say that a measurable function $h : \Omega\times [0, T] \to \mathbb{R}^{n}$ belongs to $L^{p,\alpha}([0, T], \mathbb{R}^{n})$ if and only if the quantity
		\begin{equation*}
		\|h\|_{p,\alpha,t} \coloneqq  \sup_{t \leq \tau \leq T}\textbf{E}\left( \int_{\tau}^{T}\frac{\|h(s)\|^{p}}{(s-\tau)^{1-\alpha}}\mathrm{d}s\right)^{\frac{1}{p}} < \infty.
		\end{equation*} 
		\begin{lemma}
		Let $\frac{1}{2}< \alpha < 1$ and $1 \leq p < \infty$, $L^{p,\alpha}$ is a Banach space.	
		\end{lemma}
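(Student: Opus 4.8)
The plan is to argue in two stages: first verify that $\|\cdot\|_{p,\alpha,t}$ is genuinely a norm on $\mathbb{L}^{p,\alpha}$, and then establish completeness by the standard ``absolutely convergent subsequence'' device, the only real subtlety being that the norm is a \emph{supremum} over the lower endpoint $\tau$ of a whole family of weighted $L^{p}$ quantities.

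For the norm axioms, fix $t\in[0,T)$ and abbreviate $\rho_{\tau}(h)\coloneqq\textbf{E}\big(\int_{\tau}^{T}\|h(s)\|^{p}(s-\tau)^{\alpha-1}\mathrm{d}s\big)^{1/p}$, so that $\|h\|_{p,\alpha,t}=\sup_{t\le\tau\le T}\rho_{\tau}(h)$. Each $\rho_{\tau}$ is the composition of the weighted $L^{p}([\tau,T])$ norm in $s$ (with weight $(s-\tau)^{\alpha-1}$, which is locally integrable precisely because $1-\alpha<1$, i.e.\ because $\alpha>\tfrac12$ in particular $\alpha$ need only satisfy $\alpha<1$) with the $L^{1}(\Omega)$ norm; hence $\rho_{\tau}$ is a seminorm, absolute homogeneity and the triangle inequality pass to the supremum, and only positive definiteness must be checked. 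If $\|h\|_{p,\alpha,t}=0$ then in particular $\rho_{t}(h)=0$, and since the integrand is nonnegative and the kernel $(s-t)^{\alpha-1}$ is strictly positive on $(t,T]$, this forces $h=0$ a.e.\ on $\Omega\times[t,T]$; restricting attention to that interval (or simply taking $t=0$) we obtain a genuine norm.

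For completeness I would take a Cauchy sequence $(h_{n})$ in $\|\cdot\|_{p,\alpha,t}$, extract a subsequence with $\|h_{n_{k+1}}-h_{n_{k}}\|_{p,\alpha,t}\le 2^{-k}$, and set $g_{k}\coloneqq h_{n_{k+1}}-h_{n_{k}}$, $\Phi\coloneqq\sum_{k\ge1}\|g_{k}\|$ (a $[0,\infty]$-valued function). Minkowski's inequality for the weighted $L^{p}$ norm, together with monotone convergence and Tonelli, would give $\rho_{t}(\Phi)\le\sum_{k\ge1}\rho_{t}(g_{k})\le\sum_{k\ge1}2^{-k}<\infty$; since the weight is a.e.\ finite and strictly positive, this yields $\Phi(s,\omega)<\infty$ for a.e.\ $(s,\omega)$, so $\sum_{k}g_{k}(s,\omega)$ converges absolutely a.e., and $h\coloneqq h_{n_{1}}+\sum_{k\ge1}g_{k}$ (set to $0$ off the convergence set) is a well-defined measurable process with $h_{n_{k}}\to h$ a.e. Finally, because $h-h_{n_{k}}=\sum_{j\ge k}g_{j}$ a.e., the same subadditivity applied to each $\rho_{\tau}$ gives, \emph{uniformly in} $\tau\in[t,T]$, that $\rho_{\tau}(h-h_{n_{k}})\le\sum_{j\ge k}\rho_{\tau}(g_{j})\le\sum_{j\ge k}2^{-j}=2^{1-k}$, hence $\|h-h_{n_{k}}\|_{p,\alpha,t}\le 2^{1-k}\to0$; combined with the Cauchy property this shows the whole sequence converges to $h$, and $\|h\|_{p,\alpha,t}\le\|h-h_{n_{1}}\|_{p,\alpha,t}+\|h_{n_{1}}\|_{p,\alpha,t}<\infty$ shows $h\in\mathbb{L}^{p,\alpha}$.

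I expect the only delicate point to be the bookkeeping around the supremum over $\tau$: one must check that every estimate on $\rho_{\tau}(g_{k})$ is uniform in $\tau$ (which it is, being bounded by $\|g_{k}\|_{p,\alpha,t}$), so that summing and then taking the supremum is harmless; and one must be careful that ``a.e.\ with respect to the weighted measure $(s-\tau)^{\alpha-1}\mathrm{d}s\otimes\mathbb{P}$'' coincides with ``a.e.\ with respect to $\mathrm{d}s\otimes\mathbb{P}$'', which is exactly where the standing assumption $\alpha\in(\tfrac12,1)$ (in particular $0<1-\alpha<1$) guarantees the kernel is a bona fide locally integrable, strictly positive weight. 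Everything else is the routine mechanism of Bochner--Lebesgue completeness.
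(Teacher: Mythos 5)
Your proof is correct. Note that the paper does not actually prove this lemma: it declares the norm axioms ``obvious'' and defers completeness entirely to the proof of Theorem 3.4 in \cite{omid}, so your write-up supplies precisely the details the authors omit. Your route is the standard Riesz--Fischer device (extract a rapidly Cauchy subsequence, sum the increments, pass to an a.e.\ limit, and recover convergence in norm by Fatou/monotone convergence), and the two points you isolate as delicate are exactly the right ones: (i) every bound $\rho_{\tau}(g_{k})\le\|g_{k}\|_{p,\alpha,t}$ is uniform in the lower endpoint $\tau$, so taking the supremum after summing costs nothing; and (ii) null sets for the weighted measure $(s-\tau)^{\alpha-1}\,\mathrm{d}s\otimes\mathbb{P}$ coincide with Lebesgue null sets because the kernel is a.e.\ finite and strictly positive, which is where $\alpha\in(0,1)$ enters --- and, as you correctly observe, the restriction $\alpha>\tfrac12$ plays no role in this lemma. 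Your remark on positive definiteness also flags a genuine imprecision in the paper's definition: $\|\cdot\|_{p,\alpha,t}$ only sees $h$ on $[t,T]$, so it is a norm only after restricting to that interval (or taking $t=0$), a point the authors pass over silently.
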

	\begin{proof}
		Obviously, the defined weighted norm satisfies the conditions of a norm in the  $L^{p,\alpha}$ space. It is only left to show that it is complete. Hence, a proof of this part is similar to the proof of Theorem 3.4 in \cite{omid}, so we omit it here. 
	\end{proof}
		In particular, we consider $p=2$ and $\frac{1}{2}<\alpha <1$  throughout this article such that \textit{the new weighted norm} is defined by
		\begin{equation}\label{fnorm}
		\|h\|_{2,\alpha,t} \coloneqq \sup_{t \leq \tau \leq T}\textbf{E}\left( \int_{\tau}^{T}\frac{\|h(s)\|^{2}}{(s-\tau)^{1-\alpha}}\mathrm{d}s\right)^{\frac{1}{2}}.
		\end{equation} 	
		To show existence and uniqueness, we consider the following norm:
		\begin{equation}\label{hnorm0}
		\|h\|_{2,\alpha,0} \coloneqq \sup_{0 \leq \tau \leq T}\textbf{E}\left( \int_{\tau}^{T}\frac{\|h(s)\|^{2}}{(s-\tau)^{1-\alpha}}\mathrm{d}s\right)^{\frac{1}{2}}.
		\end{equation}
	\end{definition}
	
	We define  $M[t,T]\coloneqq L^{2}_{\mathcal{F}}(t,T;\mathbb{R}^{n})\times L^{2}_{\mathcal{F}}(\mathcal{D};\mathbb{R}^{n\times m}) $ to be a Banach space endowed with the norm 
	\begin{equation*}
	\|(x,y)\|^{2}_{t}=\|x\|^{2}_{2,\alpha,t}+|\|y\||^{2}_{2,\alpha,t},
	\end{equation*}
	where 
	\begin{align}\label{yt}
	|\|y\||^{2}_{2,\alpha,t}\coloneqq \sup_{t \leq \tau \leq T}\textbf{E}\int_{\tau}^{T}(s-\tau)^{\alpha-1}\int_{s}^{T}\|y(s,u)\|^{2}\mathrm{d}u\mathrm{d}s,\quad  t\leq s\leq u\leq T.
	\end{align}
	Then $M[0,T]\coloneqq L^{2}_{\mathcal{F}}(0,T;\mathbb{R}^{n})\times L^{2}_{\mathcal{F}}([0,T]^{2};\mathbb{R}^{n\times m})$ is also a Banach space equipped with the norm as below:
	\begin{equation*}
	\|(x,y)\|^{2}_{0}=\|x\|^{2}_{2,\alpha,0}+|\|y\||^{2}_{2,\alpha,0}.
	\end{equation*}

\section{Fractional and Stochastic Calculus}\label{Sec:prel}
	We embark on this section by briefly introducing the essential structure of fractional calculus and fractional operators, as well as some important 
	inequalities of stochastic calculus. For the more salient details on these matters, see the textbooks \cite{miller-ross,kilbas,gorenflo,oldham-spanier,samko}. We introduce first the classical fractional operators in this section. Since we study backward stochastic fractional differential equation throughout this article, we consider the \textit{right} stochastic Riemann-Liouville and Caputo fractional operators of order $\alpha>0$ on $[t,T]$.

	\begin{itemize}
		\item \textit{The right Riemann-Liouville stochastic fractional derivative} of order $\alpha$ is as follows:
		\begin{align*}
	    \prescript{}{t}D^{\alpha}_{T}x(t)=\frac{(-1)^{n}}{\Gamma(n-\alpha)}\left(\frac{d}{dt} \right)^{n}\int_{t}^{T}(s-t)^{n-\alpha-1}x(s)\mathrm{d}s, \quad \text{where}\quad n-1<\alpha <  n, \quad t<T.
		\end{align*}
		\item \textit{The right Riemann-Liouville stochastic fractional integral} of order $\alpha$ is given by
		\begin{equation}
		\prescript{}{t}I^{\alpha}_{T}x(t)=\frac{1}{\Gamma(\alpha)}\int_t^T(s-t)^{\alpha-1}x(s)\,\mathrm{d}s \\,\quad \text{for} \quad t<T,
		\end{equation}
		where $\Gamma$ is the gamma function.
		\item \textit{The right Caputo stochastic fractional derivative operator} of order $\alpha$  is defined by:
		\begin{equation*}
		\prescript{C}{t}D^{\alpha}_{T}x(t)=\frac{(-1)^{n}}{\Gamma(n-\alpha)}\int_{t}^{T}(s-t)^{n-\alpha-1}x^{(n)}(s)\mathrm{d}s, \quad \text{where}\quad n-1<\alpha < n, \quad t<T.
		\end{equation*}
		In particular, for $\alpha \in (0,1)$
		\begin{equation*} 
		\prescript{}{t}I^{\alpha}_{T}\prescript{C}{t}D^{\alpha}_{T}x(t)=x(t)-x(T).
		\end{equation*}
		
		\item \textit{The relationship between the Riemann--Liouville and Caputo fractional derivatives} are as follows:
		\begin{equation} \label{eq:relation}
		\prescript{C}{t}D^{\alpha}_{T}x(t)=\prescript{}{t}D^{\alpha}_{T}x(t)-\sum_{k=0}^{n-1}\frac{(T-t)^{k-\alpha}x^{(k)}(T)}{\Gamma(k-\alpha+1)}.
		\end{equation}
		\item \textit{The property of the Riemann--Liouville fractional integral operator and the Caputo fractional derivative}  of order $\alpha$ is given by:
		\begin{equation} \label{ID} \prescript{}{t}I^{\alpha}_{T}(\prescript{C}{t}D^{\alpha}_{T}x(t))=x(t)-\sum_{k=0}^{n-1}\frac{(T-t)^{k}x^{(k)}(T)}{\Gamma(k+1)}.
		\end{equation}
	\end{itemize}  

	The following inequalities are basic inequalities of stochastic calculus (cf. \cite{oksendal,ito,prato}). First, we introduce Jensen's inequality in probability setting which plays an important role in the theory of stochastic processes. It has become a standard result, appearing in almost every introductory text in the field.
\begin{description}
	\item [Jensen's inequality in probabilistic setting:]
		Let $X$ be an integrable real-valued random variable and $\varphi$ be a convex function. Then:
	\begin{equation}\label{jensen}
		\varphi \left(\textbf{E}[X]\right)\leq \textbf{E} \left[\varphi (X)\right].
	\end{equation}
	In this probability setting, $\varphi$ is intended as the integral with respect to an expected value $\textbf{E}$. 
	\item[The law of total expectation:]
		Let $(\Omega,\mathcal{F}_{t},\mathbb{P})$ be a probability space on which two sub $\sigma$-algebras $\mathcal{G}_{1}\subset \mathcal{G}_{2} \subset \mathcal{F}$ are defined. For a random variable $X$ on such a space, the smoothing law states that if $\textbf{E}[X]$ is defined, then
	\begin{equation}
		\textbf{E}[\textbf{E}[X\mid \mathcal{G}_{2}]\mid \mathcal{G}_{1} ]=\textbf{E}[X\mid \mathcal{G}_{1}], \quad \text{a.s.}
	\end{equation}
	In special case, when $\mathcal{G}_1 = \{\emptyset,\Omega \}$ and $\mathcal{G}_{2}=\sigma (Y)$, the smoothing law reduces to 
	\begin{equation}\label{totalaw}
		\textbf{E}[\textbf{E}[X\mid Y]]=\textbf{E}[X].
	\end{equation}
\end{description}

\section{Main results for fractional backward stochastic differential equations}\label{stochastic}

In this section, we study existence and uniqueness results to a mild solution of the equation \eqref{fbsde}. To state our main results, we review the following assumptions which will be considered in Section \ref{stochastic} and \ref{coincidence}. More precisely, let us propose the standing assumptions for the function  $f$ and $g$ as follows.

\begin{assumption}\label{A0}
	For all $x,\bar{x} \in \mathbb{R}^{n}$, $y,\bar{y}\in \mathbb{R}^{n\times m}$ and $0\leq t\leq T$, there exists $c>0$ such that
	\begin{align*}
	&\|f(t,x,y)-f(t,\bar{x},\bar{y})\|^{2}\leq c \left(\|x-\bar{x}\|^{2}+\|y-\bar{y}\|^{2} \right), \quad \text{a.s},\\
	&\|g(t,s,x)-g(t,s,\bar{x})\|^{2}\leq c\|x-\bar{x}\|^{2}\quad \text{a.s}.
	\end{align*}
\end{assumption}
\begin{assumption}\label{A1}
	$f(\cdot,0,0)$ is $L^{2}(0,T;\mathbb{R}^{n})$ integrable i.e.,
	\begin{equation*}
	\int_{0}^{T} \|f(r,0,0)\|^{2}\mathrm{d}r <\infty,
	\end{equation*}
	and $g(\cdot,\cdot,0)\in L^{2}(\mathcal{D}, \mathbb{R}^{n\times m})$ is essentially bounded i.e.,
	\begin{equation*}
	ess\sup\limits_{r,s\in [0,T]}\|g(r,s,0)\|< \infty.
	\end{equation*} 
\end{assumption}

\begin{definition}
	A stochastic process $ \left\lbrace \left( x(t), y(t,s)\right) ,(t,s)\in \mathcal{D}\right\rbrace$ is called  a mild solution of \eqref{fbsde} if 
	\begin{itemize}
		\item $ \left( x,y\right)$ is adapted to $\left\lbrace \mathcal{F}_{t}\right\rbrace _{t \geq 0}$ with 
		$ \int_{0}^{T}  \|(x,y)\|^{2}\mathrm{d}t< \infty$ almost everywhere;
		\item $ (x,y)\in M[t,T]$ has continuous path on $t\in [0,T]$ a.s. and satisfies the following Volterra integral equation of second kind on $t\in [0,T]$:
	\end{itemize}	
\end{definition}	
\begin{align}  \label{integral equation}
x(t)&=\xi-\frac{1}{\Gamma(\alpha)}\int_{t}^{T}A(s-t)^{\alpha-1}x(s)\mathrm{d}s\nonumber\\
&+\frac{1}{\Gamma(\alpha)}\int_{t}^{T}(s-t)^{\alpha-1}f(s,x(s),y(t,s))\mathrm{d}s\nonumber\\
&+\frac{1}{\Gamma(\alpha)}\int_{t}^{T}(s-t)^{\alpha-1}\left[g(t,s, x(s))+y(t,s) \right] \mathrm{d}w(s).
\end{align}
\begin{definition}
	A pair of adapted process $(x,y)\in M[t,T]$ is a mild solution of \eqref{fbsde} for all $t \in [0,T]$ if it satisfies the following backward stochastic nonlinear Volterra integral equation 
	\begin{align}\label{2}
	x(t)=E_{\alpha}(A(T-t)^{\alpha})\xi &+\int_{t}^{T}(s-t)^{\alpha-1}E_{\alpha,\alpha}(A(s-t)^{\alpha})f(s,x(s),y(t,s))\mathrm{d}s\nonumber\\
	&+\int_{t}^{T}(s-t)^{\alpha-1}E_{\alpha,\alpha}(A(s-t)^{\alpha})\left[g(t,s,x(s))+y(t,s) \right] \mathrm{d}w(s), \qquad \mathbb{P}\text{-a.s},
	\end{align}
where $E_{\alpha}(\cdot)$ and $E_{\alpha,\alpha}(\cdot)$ are one and two-parameter Mittag-Leffler functions, respectively, which we give their definitions as follows:
\end{definition}
	\begin{equation}
		E_{\alpha}(t)= \sum_{k=0}^{\infty}\frac{t^{k}}{\Gamma(k \alpha +1)}, \quad  \alpha \in \mathbb{R}_{+}, t\in\mathbb{R},
	\end{equation}
	\begin{equation}
		E_{\alpha,\alpha}(t)= \sum_{k=0}^{\infty}\frac{t^{k}}{\Gamma(k \alpha +\alpha)}, \quad  \alpha \in \mathbb{R}_{+}, t\in\mathbb{R}.
	\end{equation}
These series are convergent, locally uniformly in $t$, provided the $\alpha>0$ condition is satisfied.

We now introduce a fundamental lemma which plays an important role to state and prove existence and uniqueness results.
\begin{lemma}\label{lem1}
For any $f\in L^{2}([0,T], \mathbb{R})$ and $g\in L^{2}(\mathcal{D}, \mathbb{R}^{n})$, the following linear Caputo fBSDE equation:
\begin{align}\label{3}
x(t)=E_{\alpha}(A(T-t)^{\alpha})\xi &+\int_{t}^{T}(s-t)^{\alpha-1}E_{\alpha,\alpha}(A(s-t)^{\alpha})f(s)\mathrm{d}s\nonumber\\
&+\int_{t}^{T}(s-t)^{\alpha-1}E_{\alpha,\alpha}(A(s-t)^{\alpha})\left[g(t,s)+y(t,s) \right] \mathrm{d}w(s), \quad \mathbb{P}\text{-a.s.}
\end{align}
has a unique solution in $M[0,T]$, and moreover we have
\begin{align}\label{ineq1}
\|x\|^{2}_{2,\alpha,t}+|\|y\||^{2}_{2,\alpha,t}&\leq 2M^{2}_{\alpha}\textbf{E}\|\left\lbrace \xi \mid \mathcal{F}_{\cdot} \right\rbrace\|^{2}_{2,\alpha,t}+ 16M^{2}_{\alpha}\frac{T^{\alpha}}{\alpha}\textbf{E}\|\xi\|^{2}\nonumber\\
&+2M^{2}_{\alpha,\alpha}\left(\frac{T^{2\alpha}}{\alpha^{2}}+ \frac{4(2T)^{2\alpha}}{\alpha(2\alpha-1)} \right) \|f\|^{2}_{2,\alpha,t}+2|\|g\||^{2}_{2,\alpha,t},
\end{align}
where $M_{\alpha}$ and $M_{\alpha,\alpha}$ are defined as below since Mittag-Leffler functions are entire and bounded on $[0,T]$:
\begin{align}\centering\label{Mm}
	M_{\alpha}&\coloneqq\sup \left\lbrace \|E_{\alpha}(At^{\alpha})\|, 0\leq t \leq T\right\rbrace,\nonumber\\
	M_{\alpha,\alpha}&\coloneqq\sup \left\lbrace \|E_{\alpha,\alpha}(At^{\alpha})\|, 0\leq t \leq T\right\rbrace.
\end{align}
\end{lemma}
\begin{proof}
	
\textit{Uniqueness:} Let $(x_{1},y_{1})$ and $(x_{2},y_{2})$ be two solutions of \eqref{3}.
\begin{align}\label{xy}
x_{1}(t)-x_{2}(t)=\int_{t}^{T}(s-t)^{\alpha-1}E_{\alpha,\alpha}(A(s-t)^{\alpha})\left[y_{1}(t,s)-y_{2}(t,s) \right] \mathrm{d}w(s),
\end{align}
 Taking $\textbf{E}\left\lbrace \cdot \mid \mathcal{F}_{t} \right\rbrace $ from above, we can deduce for all $t\in [0,T]$ that
 \begin{align}\label{x1x2}
 \textbf{E}\left\lbrace x_{1}(t)-x_{2}(t)\mid \mathcal{F}_{t} \right\rbrace =0.
 \end{align}
By \eqref{x1x2} we obtain $x_{1}(t)=x_{2}(t)$ and substituting this into \eqref{xy} follows that $y_{1}(t,s)=y_{2}(t,s)$, $(t,s)\in \mathcal{D}$.

\textit{Existence:} Taking a conditional expectation from \eqref{3}, we have that
\begin{equation*}
x(t)=E_{\alpha}(A(T-t)^{\alpha})\textbf{E}\left\lbrace \xi \mid \mathcal{F}_{t}\right\rbrace+\int_{t}^{T}(s-t)^{\alpha-1}E_{\alpha,\alpha}(A(s-t)^{\alpha})\textbf{E}\left\lbrace f(s) \mid \mathcal{F}_{t}\right\rbrace\mathrm{d}s.
\end{equation*}
From extended martingale representation theorem, there exists $L(\cdot)\in L^{2}_{\mathcal{F}}(0,T;L^{2}(\mathbb{R}^{n}))$ and uniquely $K(t,\cdot)\in  L^{2}_{\mathcal{F}}(\mathcal{D};L^{2}(\mathbb{R}^{n\times m}))$ which satisfy the following relations:
\begin{equation}\label{L}
\textbf{E}\left\lbrace \xi \mid \mathcal{F}_{t}\right\rbrace=\textbf{E}\xi+\int_{0}^{t}L(u)\mathrm{d}w(u),
\end{equation}
\begin{equation}\label{K}
\textbf{E}\left\lbrace f(s) \mid  \mathcal{F}_{t}\right\rbrace=\textbf{E}f(s)+\int_{0}^{t}K(s,u)\mathrm{d}w(u).
\end{equation}
Note also from \eqref{K}, we can easily deduce for all  $s \in [0,T]$ that 
\begin{equation*}
K(s,u)=0, \quad \text{a.e.}, \quad u\in [s,T], \quad \text{a.s.}
\end{equation*}
 and that satisfies
\begin{equation}
\textbf{E}\int_{0}^{T}\int_{0}^{s}|K(s,u)|^{2}\mathrm{d}u\mathrm{d}s\leq 4\textbf{E}\int_{0}^{T}|f(s)|^{2}\mathrm{d}s.
\end{equation}
Since $t\in [0.T]$, it is obvious that
\begin{align*}
\xi&=\textbf{E}\xi +\int_{0}^{T}L(u)\mathrm{d}w(u)\\
&=\textbf{E}\xi +\int_{0}^{t}L(u)\mathrm{d}w(u)+\int_{t}^{T}L(u)\mathrm{d}w(u)\\
&=\textbf{E}\left\lbrace \xi \mid \mathcal{F}_{t}\right\rbrace +\int_{t}^{T}L(u)\mathrm{d}w(u),
\end{align*}
and since $s\geq t$, we have 
\begin{align*}
f(s)&=\textbf{E}f(s)+\int_{0}^{s}K(s,u)\mathrm{d}w(u)\\
&=\textbf{E}f(s)+\int_{0}^{t}K(s,u)\mathrm{d}w(u)+\int_{t}^{s}K(s,u)\mathrm{d}w(u)\\
&=\textbf{E}\left\lbrace f(s) \mid  \mathcal{F}_{t}\right\rbrace+\int_{t}^{s}K(s,u)\mathrm{d}w(u).
\end{align*}
Therefore, we obtain
\begin{equation}\label{ll}
\textbf{E}\left\lbrace \xi \mid  \mathcal{F}_{t}\right\rbrace=\xi-\int_{t}^{T}L(u)\mathrm{d}w(u),
\end{equation}
and 
\begin{equation}\label{kk}
\textbf{E}\left\lbrace f(s) \mid  \mathcal{F}_{t}\right\rbrace= f(s)-\int_{t}^{s}K(s,u)\mathrm{d}w(u).
\end{equation}
Substituting \eqref{ll} and \eqref{kk} into \eqref{4} and using stochastic Fubini's theorem give that
\allowdisplaybreaks
\begin{align*}
x(t)&=E_{\alpha}(A(T-t)^{\alpha})\left(\xi-\int_{t}^{T}L(u)\mathrm{d}w(u) \right)+\int_{t}^{T}(s-t)^{\alpha-1} E_{\alpha,\alpha}(A(s-t)^{\alpha})\left(f(s)-\int_{t}^{s}K(s,u)\mathrm{d}w(u) \right)\mathrm{d}s \\
&=E_{\alpha}(A(T-t)^{\alpha})\xi +\int_{t}^{T}(s-t)^{\alpha-1} E_{\alpha,\alpha}(A(s-t)^{\alpha})f(s)\mathrm{d}s\\
&-E_{\alpha}(A(T-t)^{\alpha})\int_{t}^{T}L(u)\mathrm{d}w(u)-\int_{t}^{T}(s-t)^{\alpha-1}E_{\alpha,\alpha}(A(s-t)^{\alpha})\int_{t}^{s}K(s,u)\mathrm{d}w(u)\mathrm{d}s\\
&=E_{\alpha}(A(T-t)^{\alpha})\xi +\int_{t}^{T}(s-t)^{\alpha-1} E_{\alpha,\alpha}(A(s-t)^{\alpha})f(s)\mathrm{d}s\\
&-E_{\alpha}(A(T-t)^{\alpha})\int_{t}^{T}L(u)\mathrm{d}w(u)-\int_{t}^{T}\int_{u}^{T}(s-t)^{\alpha-1}E_{\alpha,\alpha}(A(s-t)^{\alpha})K(s,u)\mathrm{d}s\mathrm{d}w(u).
\end{align*}
Thus, we get that
\begin{align*}
x(t)=E_{\alpha}(A(T-t)^{\alpha})\xi +\int_{t}^{T}(s-t)^{\alpha-1} E_{\alpha,\alpha}(A(s-t)^{\alpha})f(s)\mathrm{d}s+\int_{t}^{T}\tilde{y}(t,u)\mathrm{d}w(u).
\end{align*}
Then there exists a mild solution $(x,y)\in M[0,T]$ of \eqref{3} given by
	 \begin{equation}\label{4}
	 x(t)=E_{\alpha}(A(T-t)^{\alpha})\textbf{E}\left\lbrace \xi \mid \mathcal{F}_{t}\right\rbrace+\int_{t}^{T}(s-t)^{\alpha-1}E_{\alpha,\alpha}(A(s-t)^{\alpha})\textbf{E}\left\lbrace f(s) \mid \mathcal{F}_{t}\right\rbrace\mathrm{d}s,
	 \end{equation}
	 and 
	\begin{equation}\label{5}
	\tilde{y}(t,u)=-E_{\alpha}(A(T-t)^{\alpha})L(u)-\int_{u}^{T}(s-t)^{\alpha-1}E_{\alpha,\alpha}(A(s-t)^{\alpha})K(s,u)\mathrm{d}s.
	\end{equation} 
	We finally define $y(t,u)=\tilde{y}(t,u)-g(t,u)$, $(t,u)\in \mathcal{D}$.
It is easily seen that the pair $(x,y)$ solves \eqref{3}. Therefore, the existence is proved.

From \eqref{ll} and \eqref{kk}, we invoke the following inequalities for $0\leq t\leq s\leq T$:
\begin{equation*}
\textbf{E}\int_{t}^{T}\|L(u)\|^{2}\mathrm{d}u\leq 4\textbf{E}\|\xi\|^{2},
\end{equation*}
and
\begin{equation*}
\textbf{E}\int_{t}^{s}\|K(s,u)\|^{2}\mathrm{d}u\leq 4\textbf{E}\|f(s)\|^{2}.
\end{equation*}	 
Now we estimate the solution $(x,y)$ given by  \eqref{4} and \eqref{5} in $[0,T]$. From \eqref{4} it follows that
	 \begin{align}\label{xx}
	 \sup_{t\leq \tau\leq T}\textbf{E}\int_{\tau}^{T}(s-\tau)^{\alpha-1}\|x(s)\|^{2}\mathrm{d}s&\leq 2M^{2}_{\alpha}\sup_{t\leq \tau\leq T}\textbf{E}\int_{\tau}^{T}(s-\tau)^{\alpha-1}\|\textbf{E}\left\lbrace \xi | \mathcal{F}_{s} \right\rbrace\|^{2}\mathrm{d}s\nonumber\\
	 &+2M^{2}_{\alpha,\alpha}\sup_{t\leq \tau\leq T}\textbf{E}\int_{\tau}^{T}(s-\tau)^{\alpha-1}\Big(\int_{s}^{T} (r-s)^{\alpha-1}\textbf{E}\left\lbrace \|f(r)\| \mid \mathcal{F}_{s}\right\rbrace \mathrm{d}r\Big)^{2}\mathrm{d}s\nonumber\\
	 &\coloneqq \mathcal{I}_{1}+\mathcal{I}_{2}.
	 \end{align}
	 By the inequality \eqref{totalaw}, $\mathcal{I}_{1}$ becomes as follows:
	 \begin{align}\label{I1}
	 \mathcal{I}_{1}&\coloneqq 2M^{2}_{\alpha}\sup_{t\leq \tau\leq T}\textbf{E}\int_{\tau}^{T}(s-\tau)^{\alpha-1}\|\textbf{E}\left\lbrace \xi | \mathcal{F}_{s} \right\rbrace\|^{2}\mathrm{d}s=2M^{2}_{\alpha}\textbf{E}\|\left\lbrace \xi \mid \mathcal{F}_{\cdot} \right\rbrace\|^{2}_{2,\alpha,t}.
	 \end{align}
	 By the inequality \eqref{jensen}, we obtain that
	 \begin{align}\label{I2}
	 \mathcal{I}_{2}&\coloneqq 2M^{2}_{\alpha,\alpha}\sup_{t\leq \tau\leq T}\textbf{E}\int_{\tau}^{T}(s-\tau)^{\alpha-1}\left( \int_{s}^{T}(r-s)^{\alpha-1}\textbf{E}\left\lbrace \|f(r)\| \mid \mathcal{F}_{s}\right\rbrace \mathrm{d}r  \right)^{2}\mathrm{d}s\nonumber\\
	 &= 2M^{2}_{\alpha,\alpha}\sup_{t\leq \tau\leq T}\textbf{E}\int_{\tau}^{T}(s-\tau)^{\alpha-1}\left(\textbf{E}\Biggl\{ \int_{s}^{T}(r-s)^{\alpha-1}\|f(r)\|\mathrm{d}r \mid \mathcal{F}_{s} \Biggr\} \right)^{2}\mathrm{d}s\nonumber\\
	 &\leq 2M^{2}_{\alpha,\alpha}\sup_{t \leq s \leq T}\int_{\tau}^{T}(s-\tau)^{\alpha-1}\textbf{E}\left( \int_{s}^{T}(r-s)^{\alpha-1}\|f(r)\|\mathrm{d}r\right)^{2}\mathrm{d}s\nonumber\\
	  &\leq 2M^{2}_{\alpha,\alpha}
	  \sup_{t\leq \tau\leq T}\int_{\tau}^{T}(s-\tau)^{\alpha-1}\textbf{E}\left(\int_{s}^{T}(r-s)^{\alpha-1}\mathrm{d}r \int_{s}^{T}(r-s)^{\alpha-1}\|f(r)\|^{2}\mathrm{d}r\right) \mathrm{d}s\nonumber\\
	&\leq 2M^{2}_{\alpha,\alpha}\frac{(T-t)^{\alpha}}{\alpha}  \sup_{t\leq \tau\leq T}\int_{\tau}^{T}(s-\tau)^{\alpha-1}\textbf{E}\int_{s}^{T}(r-s)^{\alpha-1}\|f(r)\|^{2}\mathrm{d}r\mathrm{d}s\nonumber\\
	&= 2M^{2}_{\alpha,\alpha}\frac{(T-t)^{\alpha}}{\alpha}  \sup_{t\leq \tau\leq T}\int_{\tau}^{T}(s-\tau)^{\alpha-1}\mathrm{d}s\|f\|^{2}_{2,\alpha,t}\nonumber\\
	&= 2M^{2}_{\alpha,\alpha}\frac{(T-t)^{2\alpha}}{\alpha^{2}}\|f\|^{2}_{2,\alpha,t}\leq 2M^{2}_{\alpha,\alpha}\frac{T^{2\alpha}}{\alpha^{2}}\|f\|^{2}_{2,\alpha,t}.
	 \end{align}
	 Substituting \eqref{I1} and \eqref{I2} into \eqref{xx}, and \eqref{fnorm} imply that
	 \begin{align}\label{6}
	 \|x\|^{2}_{2,\alpha,t}\leq 2M^{2}_{\alpha}\textbf{E}\|\left\lbrace \xi \mid \mathcal{F}_{\cdot} \right\rbrace\|^{2}_{2,\alpha,t}+2M^{2}_{\alpha,\alpha}\frac{T^{2\alpha}}{\alpha^{2}}\|f\|^{2}_{2,\alpha,t}.
	 \end{align}
Next we estimate $\tilde{y}$ by H\"{o}lder's inequality. Therefore, we get that
\begin{align*}
\|\tilde{y}(s,u)\|^{2}&\leq 2\|E_{\alpha}(A(T-s)^{\alpha}L(u))\|^{2}+2\norm{\int_{u}^{T}(r-s)^{\alpha-1}E_{\alpha,\alpha}(A(r-s)^{\alpha})K(r,u)\mathrm{d}r}^{2}\\
&\leq  2M^{2}_{\alpha}\|L(u)\|^{2}+2M^{2}_{\alpha,\alpha}\left(\frac{(T-s)^{2\alpha-1}}{2\alpha-1}-\frac{(u-s)^{2\alpha-1}}{2\alpha-1} \right) \int_{u}^{T}\|K(r,u)\|^{2}\mathrm{d}r\\
&\leq  2M^{2}_{\alpha}\|L(u)\|^{2}+2M^{2}_{\alpha,\alpha}\frac{(T+u)^{2\alpha-1}}{2\alpha-1} \int_{u}^{T}\|K(r,u)\|^{2}\mathrm{d}r\\
&\leq 2M^{2}_{\alpha}\|L(u)\|^{2}+2M^{2}_{\alpha,\alpha}\frac{(2T)^{2\alpha-1}}{2\alpha-1} \int_{u}^{T}\|K(r,u)\|^{2}\mathrm{d}r.
\end{align*}
Taking double integral of above inequality and applying stochastic Fubini's theorem twice yield that 
\allowdisplaybreaks
\begin{align*}
&\sup_{t \leq \tau \leq T}\textbf{E} \int_{\tau}^{T}(s-\tau)^{\alpha-1}\int_{s}^{T}\|\tilde{y}(s,u)\|^{2}\mathrm{d}u\mathrm{d}s \leq 2M^{2}_{\alpha}\sup_{t \leq \tau \leq T}\textbf{E}\int_{\tau}^{T}(s-\tau)^{\alpha-1}\int_{s}^{T}\|L(u)\|^{2}\mathrm{d}u\mathrm{d}s\nonumber\\
&+2M^{2}_{\alpha,\alpha}\frac{(2T)^{2\alpha-1}}{2\alpha-1} \sup_{t \leq \tau \leq T}\textbf{E}\int_{\tau}^{T}(s-\tau)^{\alpha-1}\int_{s}^{T}\int_{u}^{T}\|K(r,u)\|^{2}\mathrm{d}r\mathrm{d}u\mathrm{d}s\nonumber\\
&\leq 8M^{2}_{\alpha}\sup_{t\leq \tau\leq T}\int_{\tau}^{T}(s-\tau)^{2}\textbf{E}\|\xi\|^{2}\mathrm{d}s\\
&+2M^{2}_{\alpha,\alpha}\frac{(2T)^{2\alpha-1}}{2\alpha-1} \sup_{t \leq \tau \leq T}\textbf{E}\int_{\tau}^{T}(s-\tau)^{\alpha-1}\int_{s}^{T}\int_{s}^{r}\|K(r,u)\|^{2}\mathrm{d}u\mathrm{d}r\mathrm{d}s\nonumber\\
&\leq 8M^{2}_{\alpha}\frac{(T-t)^{\alpha}}{\alpha}\textbf{E}\|\xi\|^{2}+8M^{2}_{\alpha,\alpha}\frac{(2T)^{2\alpha-1}}{2\alpha-1}\sup_{t \leq \tau \leq T}\textbf{E}\int_{\tau}^{T}(s-\tau)^{\alpha-1}\int_{s}^{T}\|f(r)\|^{2}\mathrm{d}r \mathrm{d}s\nonumber\\
&\leq 8M^{2}_{\alpha}\frac{(T-t)^{\alpha}}{\alpha}\textbf{E}\|\xi\|^{2}+8M^{2}_{\alpha,\alpha}\frac{(2T)^{2\alpha-1}}{2\alpha-1}\sup_{t \leq \tau \leq T}\textbf{E}\int_{\tau}^{T}\int_{\tau}^{r}(s-\tau)^{\alpha-1}\|f(r)\|^{2}\mathrm{d}s \mathrm{d}r\nonumber\\
&\leq 8M^{2}_{\alpha}\frac{(T-t)^{\alpha}}{\alpha}\textbf{E}\|\xi\|^{2}+8M^{2}_{\alpha,\alpha}\frac{(2T)^{2\alpha-1}}{2\alpha-1}\sup_{t \leq \tau \leq T}\textbf{E}\int_{\tau}^{T}\int_{\tau}^{r}(s-\tau)^{\alpha-1}\mathrm{d}s\|f(r)\|^{2} \mathrm{d}r\\
&\leq 8M^{2}_{\alpha}\frac{(T-t)^{\alpha}}{\alpha}\textbf{E}\|\xi\|^{2}+8M^{2}_{\alpha,\alpha}\frac{(2T)^{2\alpha-1}(T-t)}{\alpha(2\alpha-1)} \sup_{t \leq \tau \leq T}\textbf{E}\int_{\tau}^{T}(r-\tau)^{\alpha-1}\|f(r)\|^{2}\mathrm{d}r\\
&\leq 8M^{2}_{\alpha}\frac{T^{\alpha}}{\alpha}\textbf{E}\|\xi\|^{2}+8M^{2}_{\alpha,\alpha}\frac{(2T)^{2\alpha-1}T}{\alpha(2\alpha-1)} \sup_{t \leq \tau \leq T}\textbf{E}\int_{\tau}^{T}(r-\tau)^{\alpha-1}\|f(r)\|^{2}\mathrm{d}r\\
&= 8M^{2}_{\alpha}\frac{T^{\alpha}}{\alpha}\textbf{E}\|\xi\|^{2}+4M^{2}_{\alpha,\alpha}\frac{(2T)^{2\alpha}}{\alpha(2\alpha-1)} \|f\|^{2}_{2,\alpha,t}.
\end{align*}
By definition of the norm \eqref{yt}, we obtain that
\begin{align}\label{7}
|\|\tilde{y}\||^{2}_{2,\alpha,t}\leq 8M^{2}_{\alpha}\frac{T^{\alpha}}{\alpha}\textbf{E}\|\xi\|^{2}+4M^{2}_{\alpha,\alpha}\frac{(2T)^{2\alpha}}{\alpha(2\alpha-1)} \|f\|^{2}_{2,\alpha,t}.
\end{align}
Since we have $y(t,s)=\tilde{y}(t,s)-g(t,s)$, by definition of the weighted norm \eqref{yt} we also have
\begin{equation}\label{norm}
|\|y\||^{2}_{2,\alpha,t}\leq 2|\|\tilde{y}\||^{2}_{2,\alpha,t}+2|\|g\||^{2}_{2,\alpha,t}.
\end{equation}
By taking into consideration \eqref{norm} for any $(x,y)$ the associated mild solution of \eqref{3} satisfies the following estimate:
\begin{align*}
\|x\|^{2}_{2,\alpha,t}+|\|y\||^{2}_{2,\alpha,t}&\leq \|x\|^{2}_{2,\alpha,t}+2|\|\tilde{y}\||^{2}_{2,\alpha,t}+2|\|g\||^{2}_{2,\alpha,t}\\
&\leq 2M^{2}_{\alpha}\textbf{E}\|\left\lbrace \xi \mid \mathcal{F}_{\cdot} \right\rbrace\|^{2}_{2,\alpha,t} +16M^{2}_{\alpha}\frac{T^{\alpha}}{\alpha}\textbf{E}\|\xi\|^{2}\\
&+2M^{2}_{\alpha,\alpha}\left(\frac{T^{2\alpha}}{\alpha^{2}}+ \frac{4(2T)^{2\alpha}}{\alpha(2\alpha-1)} \right) \|f\|^{2}_{2,\alpha,t}+2|\|g\||^{2}_{2,\alpha,t}.
\end{align*}
Therefore, the proof of Lemma \ref{lem1} is complete.
\end{proof}

\begin{theorem}\label{thm1}
	Let $\alpha\in (\frac{1}{2},1)$, $c>0$, $T>0$, let $M_{\alpha,\alpha}$ be defined as in \eqref{Mm}, and let $\mathfrak{L}(\alpha,c,T,M_{\alpha,\alpha})=\max\Big(2cT,2cM^{2}_{\alpha,\alpha}\left(\frac{T^{2\alpha}}{\alpha^{2}}+ \frac{4(2T)^{2\alpha}}{\alpha(2\alpha-1)} \right),2cM^{2}_{\alpha,\alpha}\left(\frac{T^{2\alpha}}{\alpha^{2}}+ \frac{4(2T)^{2\alpha}}{\alpha(2\alpha-1)} \right)T\Big)$. Assume that
	\begin{equation}\label{cond}
\mathfrak{L}(\alpha,c,T,M_{\alpha,\alpha})<1,
	\end{equation}
	then the system \eqref{fbsde} has a unique solution $(x,y) \in M[0,T]$ under Assumptions \ref{A0} and \ref{A1}.
\end{theorem}
\begin{proof}
	For any fixed $(\bar{x},\bar{y}) \in M[0,T]$, for $f(\cdot, \bar{x}(\cdot),\bar{y}(\cdot,\cdot)) \in L^{2}_{\mathcal{F}}([0,T],\mathbb{R}^{n} )$, and $g(\cdot,\cdot,\bar{x}(\cdot))\in L^{2}_{\mathcal{F}}(\mathcal{D},\mathbb{R}^{n\times m} )$, by Lemma \ref{lem1} we obtain that the following equation has a unique solution in $M[0,T]$:
	\begin{align}\label{8}
	x(t)=E_{\alpha}(A(T-t)^{\alpha})\xi &+\int_{t}^{T}(s-t)^{\alpha-1}E_{\alpha,\alpha}(A(s-t)^{\alpha})f(s,\bar{x}(s),\bar{y}(t,s))\mathrm{d}s\nonumber\\
	&+\int_{t}^{T}(s-t)^{\alpha-1}E_{\alpha,\alpha}(A(s-t)^{\alpha})\left[g(t,s, \bar{x}(s))+y(t,s) \right] \mathrm{d}w(s), \qquad \mathbb{P}\text{-a.s}.
	\end{align}
	Thus, the operator $\Psi: M[0,T] \to M[0,T]$ is defined by
	\begin{equation*}
	\Psi(\bar{x},\bar{y})=(x,y),
	\end{equation*}
	where $(x,y)$ is a solution of \eqref{8}, is well-defined.
	Now we prove the contractivity of the operator  $\Psi$. Therefore, Lemma \ref{lem1} and H\"{o}lder's inequality imply that 
	\allowdisplaybreaks
	\begin{align*}
	&\|\Psi(\bar{x},\bar{y})-\Psi(\tilde{x},\tilde{y})\|^{2}_{0}\\
	&\leq 2M^{2}_{\alpha,\alpha}\left(\frac{T^{2\alpha}}{\alpha^{2}}+ \frac{4(2T)^{2\alpha}}{\alpha(2\alpha-1)} \right)\sup_{0 \leq \tau \leq T}\textbf{E}\int_{\tau}^{T}(s-\tau)^{\alpha-1}\int_{s}^{T} \|f(s,\bar{x}(s),\bar{y}(t,s))-f(s,\tilde{x}(s),\tilde{y}(t,s))\|^{2}\mathrm{d}t\mathrm{d}s\\
	&+2\sup_{0 \leq \tau \leq T}\textbf{E}\int_{\tau}^{T}(s-\tau)^{\alpha-1}\int_{s}^{T}\|g(t,s,\bar{x}(s))-g(t,s,\tilde{x}(s))\|^{2}\mathrm{d}t\mathrm{d}s\\
	&\leq 2cM^{2}_{\alpha,\alpha}\left(\frac{T^{2\alpha}}{\alpha^{2}}+ \frac{4(2T)^{2\alpha}}{\alpha(2\alpha-1)} \right)\sup_{0 \leq \tau \leq T}\textbf{E}\int_{\tau}^{T}(s-\tau)^{\alpha-1}\int_{s}^{T}\left(\|\bar{x}(s)-\tilde{x}(s)\|^2+\|\bar{y}(t,s)-\tilde{y}(t,s)\|^{2}\right)\mathrm{d}t \mathrm{d}s\\
	&+2c\sup_{0 \leq \tau \leq T}\textbf{E}\int_{\tau}^{T}(s-\tau)^{\alpha-1}\int_{s}^{T}\|\bar{x}(s)-\tilde{x}(s)\|^{2}\mathrm{d}t\mathrm{d}s\\
	&\leq 2cM^{2}_{\alpha,\alpha}\left(\frac{T^{2\alpha}}{\alpha^{2}}+ \frac{4(2T)^{2\alpha}}{\alpha(2\alpha-1)} \right)\sup_{0 \leq \tau \leq T}\textbf{E}\int_{\tau}^{T}(s-\tau)^{\alpha-1}\int_{s}^{T}\|\bar{x}(s)-\tilde{x}(s)\|^2\mathrm{d}t 
	\mathrm{d}s\\
	&+2c\sup_{0 \leq \tau \leq T}\textbf{E}\int_{\tau}^{T}(s-\tau)^{\alpha-1}\int_{s}^{T}\|\bar{x}(s)-\tilde{x}(s)\|^{2}\mathrm{d}t\mathrm{d}s\\
	 &+2cM^{2}_{\alpha,\alpha}\left(\frac{T^{2\alpha}}{\alpha^{2}}+ \frac{4(2T)^{2\alpha}}{\alpha(2\alpha-1)} \right)\sup_{0 \leq \tau \leq T}\textbf{E}\int_{\tau}^{T}(s-\tau)^{\alpha-1}\int_{s}^{T}\|\bar{y}(t,s)-\tilde{y}(t,s)\|^{2}\mathrm{d}t \mathrm{d}s\\
	&\leq 2cM^{2}_{\alpha,\alpha}\left(\frac{T^{2\alpha}}{\alpha^{2}}+ \frac{4(2T)^{2\alpha}}{\alpha(2\alpha-1)} \right)T\sup_{0 \leq \tau \leq T}\textbf{E}\int_{\tau}^{T}(s-\tau)^{\alpha-1}\|\bar{x}(s)-\tilde{x}(s)\|^2\mathrm{d}s\\
	&+2cT\sup_{0 \leq \tau \leq T}\textbf{E}\int_{\tau}^{T}(s-\tau)^{\alpha-1}\|\bar{x}(s)-\tilde{x}(s)\|^{2}\mathrm{d}s\\
	&+2cM^{2}_{\alpha,\alpha}\left(\frac{T^{2\alpha}}{\alpha^{2}}+ \frac{4(2T)^{2\alpha}}{\alpha(2\alpha-1)} \right)\sup_{0 \leq \tau \leq T}\textbf{E}\int_{\tau}^{T}(s-\tau)^{\alpha-1}\int_{s}^{T}\|\bar{y}(t,s)-\tilde{y}(t,s)\|^{2}\mathrm{d}t \mathrm{d}s\\
	&=\mathfrak{L}(\alpha,c,T,M_{\alpha,\alpha})\left(  \|\bar{x}-\tilde{x}\|^{2}_{2,\alpha,0}+|\|\bar{y}-\tilde{y}\||_{2,\alpha,0}^{2}\right).
	\end{align*}
This together with \eqref{cond} assures a contractivity of the operator $\Psi$ on $M[0,T]$, which in turn implies the existence and uniqueness of the solution to \eqref{8}.
\end{proof}

\section{A coincidence between the notion of integral equation and mild solution}\label{coincidence}
In this section, we are going to prove the coincidence between the notion of stochastic Volterra integral equation and mild solution by the following theorem. 
\begin{theorem}\label{theorem1}
	The unique mild solution of\eqref{fbsde} with terminal value $\xi$ on $[0,T]$ is given by 
	\begin{align*}
	\psi(t,\xi)=E_{\alpha}(A(T-t)^{\alpha})\xi &+\int_{t}^{T}(s-t)^{\alpha-1}E_{\alpha,\alpha}(A(s-t)^{\alpha})f(s,\psi(s,\xi),y(t,s))\mathrm{d}s\\
	&+\int_{t}^{T}(s-t)^{\alpha-1}E_{\alpha,\alpha}(A(s-t)^{\alpha})\left[g(t,s,\psi(s,\xi))+y(t,s) \right] \mathrm{d}w(s).
	\end{align*}
\end{theorem}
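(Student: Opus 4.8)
The plan is to deduce Theorem~\ref{theorem1} from the equivalence of the two descriptions of a mild solution — the second-kind backward Volterra integral equation \eqref{integral equation} and the Mittag-Leffler representation \eqref{2} — together with the uniqueness already established in Theorem~\ref{thm1}. By construction (Lemma~\ref{lem1} and the fixed point argument of Theorem~\ref{thm1}) the unique mild solution $(x,y)\in M[0,T]$ satisfies the representation \eqref{2}, i.e. $x(t)=\psi(t,\xi)$ with the self-referential formula displayed in the statement; what has to be shown is that this representation is genuinely the inverse of \eqref{integral equation}, so that the notion of "integral equation solution" and the notion of "mild solution" coincide and the solution is pinned down uniquely by either.

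First I would freeze, along the solution, the processes $\bar f(t,s):=f(s,x(s),y(t,s))$ and $\bar g(t,s):=g(t,s,x(s))+y(t,s)$, so that \eqref{integral equation} reads $(I+\mathcal K)x=\Phi$, where $(\mathcal K h)(t)=\frac{1}{\Gamma(\alpha)}\int_t^T A(s-t)^{\alpha-1}h(s)\,\mathrm{d}s$ and $\Phi$ collects $\xi$ and the two $\prescript{}{t}I^{\alpha}_{T}$-integrals of $\bar f$ and $\bar g$. Solving by the Neumann series $x=\sum_{n\ge 0}(-\mathcal K)^n\Phi$, the $n$-fold iterate of $\mathcal K$ has kernel $(s-t)^{n\alpha-1}A^{n}/\Gamma(n\alpha)$, obtained inductively from the Euler beta integral $\int_t^s(r-t)^{a-1}(s-r)^{b-1}\,\mathrm{d}r=\frac{\Gamma(a)\Gamma(b)}{\Gamma(a+b)}(s-t)^{a+b-1}$; summing, the resolvent applied to the constant part of $\Phi$ produces $\sum_{n\ge 0}\frac{(T-t)^{n\alpha}A^{n}}{\Gamma(n\alpha+1)}\xi=E_{\alpha}(A(T-t)^{\alpha})\xi$, while the resolvent applied to each integral term, after re-collapsing the nested time integrals by the (stochastic) Fubini theorem and the beta identity $\int_t^u(s-t)^{n\alpha-1}(u-s)^{\alpha-1}\,\mathrm{d}s=\frac{\Gamma(n\alpha)\Gamma(\alpha)}{\Gamma((n+1)\alpha)}(u-t)^{(n+1)\alpha-1}$, converts the bare kernel $(s-t)^{\alpha-1}/\Gamma(\alpha)$ into $(s-t)^{\alpha-1}E_{\alpha,\alpha}(A(s-t)^{\alpha})$. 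Assembling the three contributions gives exactly the displayed formula. Equivalently, one may verify the formula directly: substituting $x(s)=\psi(s,\xi)$ into the right-hand side of \eqref{integral equation}, the convolution $\frac{A}{\Gamma(\alpha)}\int_t^T(s-t)^{\alpha-1}E_{\alpha}(A(T-s)^{\alpha})\xi\,\mathrm{d}s$ telescopes — via $\int_t^T(s-t)^{\alpha-1}(T-s)^{k\alpha}\,\mathrm{d}s=\frac{\Gamma(\alpha)\Gamma(k\alpha+1)}{\Gamma((k+1)\alpha+1)}(T-t)^{(k+1)\alpha}$ — to $E_{\alpha}(A(T-t)^{\alpha})\xi-\xi$, and the convolution of the $E_{\alpha,\alpha}$-kernels reproduces the $E_{\alpha,\alpha}(A(s-t)^{\alpha})$ weights in front of $f$ and $g+y$; the reverse passage, from \eqref{2} back to \eqref{integral equation}, can also be phrased as applying $\prescript{}{t}I^{\alpha}_{T}$ and using the composition property \eqref{ID}.

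The convergence of the Neumann series in $M[0,T]$, equivalently in the weighted norm $\|\cdot\|_{2,\alpha,0}$ of \eqref{hnorm0}, is automatic because the factor $\Gamma(n\alpha)$ in the denominator makes $\|\mathcal K^{n}\|$ decay super-exponentially; the required estimates are exactly of the type carried out in Lemma~\ref{lem1}, using Hölder's inequality, the a priori bound \eqref{ineq1}, and the uniform bounds $M_{\alpha}$ and $M_{\alpha,\alpha}$ on the Mittag-Leffler functions over $[0,T]$. The termwise stochastic integration and the interchange of the deterministic resolvent kernel with the It\^{o} integral are justified by the stochastic Fubini theorem, precisely as in the proof of Lemma~\ref{lem1}. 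Once the equivalence of \eqref{integral equation} and the $\psi$-formula is in place, Theorem~\ref{thm1} — valid under Assumptions~\ref{A0}, \ref{A1} and the smallness condition \eqref{cond} — supplies a unique mild solution, which therefore must be $\psi(\cdot,\xi)$.

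The main obstacle I anticipate is the base-point dependence of the forcing. The arguments $y(t,s)$ and $g(t,s,\cdot)$ carry the \emph{outer} variable $t$, so under the convolution $\frac{A}{\Gamma(\alpha)}\int_t^T(s-t)^{\alpha-1}x(s)\,\mathrm{d}s$ the term $x(s)$ brings in $y(s,\cdot)$ and $g(s,\cdot,\cdot)$ with the \emph{intermediate} base point $s\ne t$; consequently the naive beta-function collapse does not immediately factor out the frozen forcing, and one cannot reduce the problem, verbatim, to the scalar-kernel resolvent calculation. Handling this requires exploiting the explicit structure of the solution provided by the fundamental Lemma~\ref{lem1} — in particular the representation $\tilde y(t,u)=-E_{\alpha}(A(T-t)^{\alpha})L(u)-\int_u^T(s-t)^{\alpha-1}E_{\alpha,\alpha}(A(s-t)^{\alpha})K(s,u)\,\mathrm{d}s$ with $y=\tilde y-g$ — whose own $t$-dependence is carried by $E_{\alpha}$ and $E_{\alpha,\alpha}$ kernels, so that these intermediate-base-point contributions reassemble correctly into the $E_{\alpha,\alpha}(A(s-t)^{\alpha})$-weighted integrals with the original base point $t$. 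Untangling this nested dependence cleanly, while keeping every interchange of stochastic integration and deterministic convolution inside the weighted $\mathbb L^{2,\alpha}$ framework, is the delicate part; the remainder is beta-function algebra together with the estimates already established in Lemma~\ref{lem1}.
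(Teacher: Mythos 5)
Your route is genuinely different from the paper's: you attempt to pass from the second-kind Volterra equation \eqref{integral equation} to the Mittag-Leffler formula by a Neumann-series / resolvent-kernel computation, whereas the paper never iterates the kernel at all. Instead it tests $\psi(t,\xi)-\bar{\psi}(t,\xi)$ against an arbitrary $h\in\mathbb{L}^{2}(\Omega,\mathscr{F}_{T},\mathbb{R}^{m})$ via the martingale representation $h=\textbf{E}h+\int_{0}^{T}\Theta(s)\,\mathrm{d}w(s)$, reduces the comparison to the deterministic functions $\chi_{i}(t)=\textbf{E}\psi(t,\xi)h_{i}$ and $\tilde{\chi}_{i}(t)=\textbf{E}\bar{\psi}(t,\xi)h_{i}$, shows these satisfy the same type of fractional equation, derives the quadratic estimate of Remark \ref{remark2}, and closes with a continuation argument on small intervals $[T^{*}-\delta,T^{*}]$ chosen so that the local contraction constant in \eqref{star} is less than one.

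The problem is that your proposal contains a genuine gap, and you have named it yourself without closing it. The entire content of the theorem is the claim that the resolvent of the kernel $A(s-t)^{\alpha-1}/\Gamma(\alpha)$ converts the bare kernels of \eqref{integral equation} into $E_{\alpha,\alpha}(A(s-t)^{\alpha})$-weighted kernels \emph{with the forcing still anchored at the outer base point $t$}, i.e.\ still carrying $y(t,s)$ and $g(t,s,\cdot)$. As you correctly observe, substituting the equation for $x(s)$ into $\int_{t}^{T}A(s-t)^{\alpha-1}x(s)\,\mathrm{d}s$ produces iterated terms involving $f(r,x(r),y(s,r))$ and $g(s,r,x(r))+y(s,r)$ with the intermediate base point $s$, so the Euler beta identity cannot factor the forcing out of the inner integral: the $n$-fold iterate of $\mathcal{K}$ applied to $\Phi$ is \emph{not} a scalar convolution of $(s-t)^{n\alpha-1}A^{n}/\Gamma(n\alpha)$ against a fixed function of the integration variable. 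Your proposed remedy --- that the intermediate-base-point contributions ``reassemble correctly'' once one inserts the representation of $\tilde{y}(t,u)$ from Lemma \ref{lem1} --- is precisely the step that would constitute the proof, and it is nowhere carried out: you exhibit no computation showing that the discrepancies of the form $y(s,r)-y(t,r)$ and $g(s,r,\cdot)-g(t,r,\cdot)$ cancel or recombine into the claimed $E_{\alpha,\alpha}(A(s-t)^{\alpha})$ weights, and this is not automatic, since for a generic adapted pair $(x,y)$ the two formulations need not coincide; the identity can only hold for the particular pair constructed in Lemma \ref{lem1}. Until that reassembly is made explicit, ``assembling the three contributions gives exactly the displayed formula'' is an assertion, not a proof. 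A secondary point: the convergence of the Neumann series and the termwise stochastic Fubini interchanges would also have to be verified in the two-parameter norm $|\|\cdot\||_{2,\alpha,0}$ of \eqref{yt}, not only in $\|\cdot\|_{2,\alpha,0}$, because the iteration acts on the $y$-component as well as on $x$.
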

\begin{proof}
	Before providing the proof of above theorem, we first need to show preparatory lemma and remark.
	In doing so, we present martingale representation theorem for any function $h \in L^{2}(\Omega, \mathcal{F}_{T}, \mathbb{R}^{m})$, there exists a unique adapted process $\Theta \in L^{2}(\Omega, \mathcal{F}_{T}, \mathbb{R}^{m})$ such that
	\begin{equation*}
	h=\textbf{E}h+\int_{0}^{T}\Theta(s)\mathrm{d}w(s).
	\end{equation*}
	It is clear that
	\begin{equation*}
	h= \sum_{i=1}^{m}h_{i}e_{i}= \sum_{i=1}^{m}\Big(\textbf{E}h_{i}+\int_{0}^{T}\theta_{i}(s)\mathrm{d}w(s)\Big)e_{i},
	\end{equation*}
	where
	\begin{equation*}
	h_{i}=\textbf{E}h_{i}+\int_{0}^{T}\theta_{i}(s)\mathrm{d}w(s), \quad h_{i} \in L^{2}(\Omega, \mathcal{F}_{T}, \mathbb{R}^{m}).
	\end{equation*}
	It is sufficient to show that 
	\begin{equation} \label{phi and psi}
	\psi(t,\xi)=\bar{\psi}(t,\xi).
	\end{equation}
	To show (\ref{phi and psi}), it is enough to prove that for any $h \in L^{2}(\Omega, \mathbb{F}_{T}, \mathbb{R}^{m})$,
	\begin{equation*}
	\textbf{E}\langle \psi(t,\xi),h \rangle = \textbf{E}\langle \bar{\psi}(t,\xi), h \rangle.
	\end{equation*}
	In other words, we have that
	\begin{equation*}
	\textbf{E}\langle \psi(t,\xi)-\bar{\psi}(t, \xi),h \rangle = \sum_{i=1}^{m}\textbf{E}\langle (\psi(t,\xi)-\bar{\psi}(t,\xi))h_{i}, e_{i} \rangle.
	\end{equation*}
	It follows that
	\begin{align*}
	|\textbf{E}\langle \psi(t,\xi)-\bar{\psi}(t, \xi),h \rangle|^{2} &\leq \|\sum_{i=1}^{m}\textbf{E}(\psi(t,\xi)-\bar{\psi}(t,\xi))h_{i}\|^{2}\\
	&\leq \sum_{i=1}^{m} 1^{2} \sum_{i=1}^{m}\|\textbf{E}(\psi(t,\xi)-\bar{\psi}(t,\xi))h_{i}\|^{2}. \nonumber \\
	&\leq m\sum_{i=1}^{m}\|\textbf{E}(\psi(t,\xi)-\bar{\psi}(t,\xi))h_{i}\|^{2}. \nonumber
	\end{align*}
	Before estimating $|\textbf{E}\langle \psi(t,\xi)-\bar{\psi}(t, \xi),h \rangle|$, we define the following functions:
	\begin{align*}
	&\chi_{i}(t)= \textbf{E}\psi(t,\xi)h_{i}, \quad \kappa_{i}(t)= \textbf{E}f(t,\psi(t,\xi),y(t,s))h_{i}, \\
	&\tilde{\chi}_{i}(t)= \textbf{E}\bar{\psi}(t,\xi)h_{i}, \quad
	\tilde{\kappa}_{i}(t)= \textbf{E}f(t,\bar{\psi}(t,\xi),y(t,s))h_{i}.
	\end{align*}
	\begin{remark} \label{remark1}
		Since $\psi(t,\xi), \bar{\psi}(t,\xi) \in M[0,T]$ , the functions $\chi_{i}, \kappa_{i}, \tilde{\chi}_{i}, \tilde{\kappa}_{i}$ are measurable and bounded  on $[t,T]$. 
	\end{remark}	
	\begin{lemma}
		For all $t \in [0,T]$ and $c \in \mathbb{R}^{m}$, the following statements hold:	
		\begin{align} \label{chi}
	\chi_{i}(t)=cE_{\alpha}(A(T-t)^{\alpha})\textbf{E}\xi &+\int_{t}^{T}(s-t)^{\alpha-1}E_{\alpha,\alpha}(A(s-t)^{\alpha})\kappa_{i}(s)\mathrm{d}s\nonumber\\
	&+\int_{t}^{T}(s-t)^{\alpha-1}E_{\alpha,\alpha}(A(s-t)^{\alpha})\textbf{E}\left\lbrace \theta_{i}(s)\left[g(t,s,\psi(s,\xi))+y(t,s) \right]\right\rbrace  \mathrm{d}w(s)
		\end{align}
		and
		\begin{align} \label{tilde{chi}}
		\tilde{\chi}_{i}(t)=cE_{\alpha}(A(T-t)^{\alpha})\textbf{E}\xi &+\int_{t}^{T}(s-t)^{\alpha-1}E_{\alpha,\alpha}(A(s-t)^{\alpha})\tilde{\kappa}_{i}(s)\mathrm{d}s\nonumber\\
		&+\int_{t}^{T}(s-t)^{\alpha-1}E_{\alpha,\alpha}(A(s-t)^{\alpha})\textbf{E}\left\lbrace \theta_{i}(s)\left[g(t,s,\bar{\psi}(s,\xi))+y(t,s) \right]\right\rbrace  \mathrm{d}w(s), \quad \mathbb{P}\text{-a.s}.
		\end{align}
	\end{lemma}
	
	\begin{proof}
		Since $\psi(t,\xi)$ is a solution of \eqref{fbsde}, it follows that 
		\begin{align}\label{Phi}
		\psi(t,\xi)&=\xi-\frac{1}{\Gamma(\alpha)}\int_{t}^{T}A(s-t)^{\alpha-1}\psi(s,\xi)\mathrm{d}s\nonumber\\
		&+\frac{1}{\Gamma(\alpha)}\int_{t}^{T}(s-t)^{\alpha-1}f(s,\psi(s,\xi),y(t,s))\mathrm{d}s\nonumber\\
		&+\frac{1}{\Gamma(\alpha)}\int_{t}^{T}(s-t)^{\alpha-1}\left[g(t,s,\psi(s,\xi))+y(t,s) \right] \mathrm{d}w(s).
		\end{align}
		By taking product of both sides of \eqref{Phi} with $h_{i}$  and then taking expectation of both sides, and finally by the property of stochastic integrals, we obtain that
		\begin{align*}
		\chi_{i}(t)&=c\textbf{E}\xi-\frac{1}{\Gamma(\alpha)}\int_{t}^{T}A(s-t)^{\alpha-1}\chi_{i}(s)\mathrm{d}s\\
		&+\frac{1}{\Gamma(\alpha)}\int_{t}^{T}(s-t)^{\alpha-1}\kappa_{i}(s)\mathrm{d}s\\
		&+\frac{1}{\Gamma(\alpha)}\int_{t}^{T}(s-t)^{\alpha-1}\textbf{E}\left\lbrace \theta_{i}(s)\left[g(t,s,\psi(s,\xi))+y(t,s) \right]\right\rbrace\mathrm{d}s\\
		&=c\textbf{E}\xi-\frac{1}{\Gamma(\alpha)}\int_{t}^{T}(s-t)^{\alpha-1}\Big(A\chi_{i}(s)-\kappa_{i}(s)-\textbf{E}\left\lbrace \theta_{i}(s)\left[g(t,s,\psi(s,\xi))+y(t,s) \right]\right\rbrace \Big)\mathrm{d}s.
		\end{align*}
		Therefore,  $\chi_{i}(t)$ is a solution of the following fractional stochastic backward differential equation:
		\begin{equation}
		\prescript{C}{t}{D^{\alpha}_{T}}x(t)= Ax(t)-\kappa_{i}(t) -\textbf{E}\theta_{i}(t)\left\lbrace \left[g(t,s,\psi(t,\xi))+y(t,s) \right]\right\rbrace , \quad x(T)= c\textbf{E}\xi.
		\end{equation}
		Then, by means of Remark \ref{remark1},  \eqref{chi} is proved. 
		Next, we define
		\begin{align} \label{Psi}	
		\bar{\psi}(t,\xi)=E_{\alpha}(A(T-t)^{\alpha})\xi &+\int_{t}^{T}(s-t)^{\alpha-1}E_{\alpha,\alpha}(A(s-t)^{\alpha})f(s,\bar{\psi}(s,\xi),y(t,s))\mathrm{d}s\nonumber\\
		&+\int_{t}^{T}(s-t)^{\alpha-1}E_{\alpha,\alpha}(A(s-t)^{\alpha})\left[g(t,s,\bar{\psi}(s,\xi))+y(t,s) \right] \mathrm{d}w(s), \qquad \mathbb{P}\text{-a.s.}
		\end{align}
		Similarly, \eqref{tilde{chi}} is proved as follows:
		\begin{align*} 	
		\tilde{\chi}_{i}(t)=cE_{\alpha}(A(T-t)^{\alpha})\textbf{E}\xi &+\int_{t}^{T}(s-t)^{\alpha-1}E_{\alpha,\alpha}(A(s-t)^{\alpha})\tilde{\kappa}_{i}(s)\mathrm{d}s\nonumber\\
		&+\int_{t}^{T}(s-t)^{\alpha-1}E_{\alpha,\alpha}(A(s-t)^{\alpha})\textbf{E}\left\lbrace \theta_{i}(s)\left[g(t,s,\bar{\psi}(s,\xi))+y(t,s) \right]\right\rbrace \mathrm{d}w(s), \qquad \mathbb{P}\text{-a.s}.
		\end{align*}
		Therefore, $\tilde{\chi}_{i}(t)$ is a solution of the following fractional stochastic backward differential equations:
		\begin{equation*}
		\prescript{C}{t}{D^{\alpha}_{T}}x(t)= Ax(t)-\tilde{\kappa}_{i}(t) -\textbf{E}\left\lbrace \theta_{i}(t)\left[g(t,s,\bar{\psi}(t,\xi))+y(t,s) \right]\right\rbrace 
		, \quad x(T)= cE_{\alpha}(A(T-t)^{\alpha})\textbf{E}\xi.
		\end{equation*}
	\end{proof}
	\begin{remark} \label{remark2}
		For any $h \in L^{2}(\Omega, \mathcal{F}_{T}, \mathbb{R}^{m})$, we have	
		
		\begin{align} \label{remproof}
		|\textbf{E}\langle \psi(t,\xi)-\bar{\psi}(t,\xi),h\rangle |^{2}
		&=m \sum_{i=1}^{m}\|\textbf{E}(\psi(t,\xi)-\bar{\psi}(t, \xi))h_{i}\|^{2} \nonumber\\
		&\leq 4mcM^{2}_{\alpha,\alpha}\frac{(T-t)^{2\alpha-1}}{2\alpha-1}\int_{t}^{T}\textbf{E}\|\psi(s,\xi)-\bar{\psi}(s,\xi)\|^{2}\mathrm{d}s  \textbf{E}\|h\|^{2}\nonumber\\
		&+4mcM^{2}_{\alpha,\alpha} \int_{t}^{T}(s-t)^{2\alpha-2}\textbf{E}\|\psi(s,\xi)-\bar{\psi}(s,\xi)\|^{2}\mathrm{d}s  \textbf{E}\|h\|^{2}.
		\end{align} 
	\end{remark}
	
	\begin{proof}
		To prove Remark \ref{remark2}, we start with the following inequality:
		\allowdisplaybreaks
		\begin{align} \label{X-Y}
		|\langle \psi(t,\xi)-\bar{\psi}(t,\xi),h \rangle|^{2}
		&=|\sum_{i=1}^{m}\textbf{E}\langle  \psi(t,\xi)-\bar{\psi}(t, \xi), h_{i} \rangle|^{2} \\ \nonumber
		&\leq \sum_{i=1}^{m} 1^{2} \sum_{i=1}^{m}\|\textbf{E}(\psi(t,\xi)-\bar{\psi}(t, \xi))h_{i}\|^{2} \\ \nonumber
		&\leq m\sum_{i=1}^{m}|\langle \psi(t,\xi)-\bar{\psi}(t,\xi),h_{i} \rangle |^{2}\\
		&\leq m\sum_{i=1}^{m}\|\textbf{E}( \psi(t,\xi)-\bar{\psi}(t,\xi))h_{i}\|^{2} \nonumber \\
		&\leq m\sum_{i=1}^{m}\|\chi_{i}(t)-\tilde{\chi}_{i}(t)\|^{2}. \nonumber
		\end{align}	
		First, we estimate $\|\chi_{i}(t)-\tilde{\chi}_{i}(t)\|$ as below:
		\begin{align*}
		\|\chi_{i}(t)-\tilde{\chi}_{i}(t)\|&\leq 
		M_{\alpha,\alpha}\int_{t}^{T} (s-t)^{\alpha-1}\|\kappa_{i}(s)-\tilde{\kappa}_{i}(s)\|\mathrm{d}s\nonumber\\
		&+mM_{\alpha,\alpha}\int_{t}^{T} (s-t)^{\alpha-1}\textbf{E}(\|\theta_{i}(s)\|\|\psi(s,\xi)-\bar{\psi}(s,\xi)\|)\mathrm{d}s.
		\end{align*}	
		Applying Cauchy-Schwartz inequality yields that
		\begin{align} \label{est}
		\|\chi_{i}(t)-\tilde{\chi}_{i}(t)\| &\leq M_{\alpha,\alpha}\sqrt{\frac{(T-t)^{2\alpha-1}}{2\alpha-1}}\Big( \int_{t}^{T}\|\kappa_{i}(s)-\tilde{\kappa}_{i}(s)\|^{2}\mathrm{d}s\Big)^{\frac{1}{2}}\nonumber \\
		&+mM_{\alpha,\alpha}\Big(\int_{t}^{T}\textbf{E} \|\theta_{i}(s)\|^{2}ds \Big)^{\frac{1}{2}} \Big(\int_{t}^{T}(s-t)^{2\alpha-2}\textbf{E}\|\psi(s,\xi)-\bar{\psi}(s,\xi)\|^{2} \mathrm{d}s \Big)^{\frac{1}{2}}. 
		\end{align}
		By definition of $\kappa$ and  $\tilde{\kappa}$ for all $s \in [t,T]$,
		\begin{align*}
		\|\kappa_{i}(s)-\tilde{\kappa}_{i}(s)\|^{2}&=\|\textbf{E}(f(s,\psi(s,\xi),y(t,s))-f(s,\bar{\psi}(s,\xi),y(t,s)))h_{i}\|^{2} \\
		&=\sum_{i=1}^{m}| \textbf{E} \langle f_{i}(s, \psi(s, \xi),y(t,s))-f_{i}(s, \bar{\psi}(s, \xi),y(t,s)), h_{i}\rangle|^{2}\\
		&\leq \sum_{i=1}^{m} \textbf{E}\|f_{i}(s, \psi(s, \xi),y(t,s))-f_{i}(s, \bar{\psi}(s, \xi),y(t,s))\|^{2} \textbf{E}\|h_{i}\|^{2}\\
		&= \textbf{E}\|f(s,\psi(s,\xi),y(t,s))-f(s,\bar{\psi}(s,\xi),y(t,s))\|^{2}\textbf{E}\|h_{i}\|^{2}\\
		&\leq c\textbf{E}\|\psi(s, \xi)-\bar{\psi}(s,\xi)\|^{2} \textbf{E}\| h_{i}\|^{2}.
		\end{align*}
		Then, we make use of above estimation in \eqref{est}, we obtain that
		\begin{align} 
		\|\chi_{i}(t)-\tilde{\chi}_{i}(t)\| &\leq mM_{\alpha,\alpha}\sqrt{\frac{(T-t)^{2\alpha-1}}{2\alpha-1}}\Big( \int_{t}^{T}\left( \textbf{E}\|\psi(s, \xi)-\bar{\psi}(s,\xi)\|^{2}\mathrm{d}s\right)^{\frac{1}{2}} \left( \textbf{E}\| h_{i}\|^{2}\right)^{\frac{1}{2}}\nonumber \\
		&+mM_{\alpha,\alpha}\Big(\int_{t}^{T}\textbf{E} \|\theta_{i}(s)\|^{2}ds \Big)^{\frac{1}{2}} \Big(\int_{t}^{T}(s-t)^{2\alpha-2}\textbf{E}\|\psi(s,\xi)-\bar{\psi}(s,\xi)\|^{2} \mathrm{d}s \Big)^{\frac{1}{2}}.
		\end{align}
		Now we take expectation of \eqref{X-Y} and plug \eqref{est} into \eqref{X-Y}, we attain desired result as follows:
		\begin{align*}
		|\textbf{E}\langle \psi(t,\xi)-\bar{\psi}(t,\xi),h\rangle |^{2} &=|\sum_{i=1}^{m}\textbf{E}\langle  \psi(t,\xi)-\bar{\psi}(t, \xi), h_{i} \rangle|^{2} \\
		&\leq m \sum_{i=1}^{m}\|\textbf{E}(\psi(t,\xi)-\bar{\psi}(t, \xi))h_{i}\|^{2} \\
		&\leq m \sum_{i=1}^{m}\|\textbf{E}(\psi(t,\xi)-\bar{\psi}(t, \xi))h_{i}\|^{2} \\
		&\leq 4mcM^{2}_{\alpha,\alpha}\frac{(T-t)^{2\alpha-1}}{2\alpha-1}\int_{t}^{T}\textbf{E}\|\psi(s,\xi)-\bar{\psi}(s,\xi)\|^{2}\mathrm{d}s  \textbf{E}\|h\|^{2}\\
		&+4cmM^{2}_{\alpha,\alpha} \int_{t}^{T}(s-t)^{2\alpha-2}\textbf{E}\|\psi(s,\xi)-\bar{\psi}(s,\xi)\|^{2}\mathrm{d}s  \textbf{E}\|h\|^{2},
		\end{align*}
		which completes the proof.
	\end{proof}
	
	\textit{Proof of Theorem \ref{theorem1}.} Let $T^{*}= \inf \left\lbrace t \in \left[0,T\right]; \psi(t,\xi) \neq \bar{\psi}(t,\xi)\right\rbrace$. Then it is sufficient to show that $T^{*}=T$. \\
	Suppose the contrary : $T^{*}< T$. Choose and fix an arbitrary $\delta >0$ satisfying the following expression:
	\begin{equation} \label{star}
	4mcM^{2}_{\alpha,\alpha}\frac{(T-t)^{2\alpha-1}}{2\alpha-1}\delta +4m cM^{2}_{\alpha,\alpha}\frac{\delta^{2\alpha-1}}{2\alpha-1} <1.
	\end{equation}
	To lead contradiction, we show that $\psi(t,\xi) = \bar{\psi}(t,\xi)$ for all $t \in [T^{*}-\delta,T^{*}]$. Using Ito's isometry, there exists a unique $h \in L^{2}$ such that  $\psi(t,\xi) - \bar{\psi}(t,\xi)= h$. Therefore, we have 
	\begin{equation*}
	\textbf{E}\|\psi(t,\xi) - \bar{\psi}(t,\xi)\|^{2}= \textbf{E}\|h\|^{2}.
	\end{equation*}
	Using Remark \eqref{remark2}, we attain 
	\begin{align*}
	\textbf{E}\|\psi(t,\xi) - \bar{\psi}(t,\xi)\|^{2}&\leq 4mcM^{2}_{\alpha,\alpha}\frac{(T-t)^{2\alpha-1}}{2\alpha-1} \int_{t}^{T^{*}}\textbf{E}\| \psi(s,\xi) - \bar{\psi}(s,\xi)\|^{2}\mathrm{d}s \\
	&+4mcM^{2}_{\alpha,\alpha} \int_{t}^{T^{*}}(s-t)^{2\alpha-2}\textbf{E}\| \psi(s,\xi) - \bar{\psi}(s,\xi)\|^{2}\mathrm{d}s.
	\end{align*}  
	As a consequence, we have that
	\begin{align*}
	\sup_{t \in [T^{*}-\delta,T^{*}]}\textbf{E}\|\psi(t,\xi) - \bar{\psi}(t,\xi)\|^{2} 
	&\leq \Big[4mcM^{2}_{\alpha,\alpha}\frac{(T-t)^{2\alpha-1}}{2\alpha-1}\delta+4mcM^{2}_{\alpha,\alpha}\frac{\delta^{2\alpha-1}}{2\alpha-1}\Big]\\
	&\times \sup_{t \in [T^{*}-\delta,T^{*}]}\textbf{E}\|\psi(t,\xi) - \bar{\psi}(t,\xi)\|^{2}.
	\end{align*}
	By selecting $\delta$ as in \eqref{star}, we have  $\sup_{t \in [T^{*}-\delta,T^{*}]}\textbf{E}\|\psi(t,\xi) - \bar{\psi}(t,\xi)\|^{2}=0$. This leads to a contradiction and the proof is thus complete.
\end{proof}
\section{Conclusions and future works}\label{open problems}
The main contributions of our work are described in detail below:
	\begin{itemize}
		\item  It is worthy mention that we first formulated a new problem in BSDE theory, namely the so-called Caputo fBSDE, which is an untreated topic in recent literature;
		\item we then introduced a new weighted norm in the square integrable measurable function space that is useful for proving a fundamental lemma and its well-posedness;
		\item  To derive an adapted pair of stochastic processes, we established a fundamental lemma which plays a crucial role in the theory of Caputo fBSDE;
		\item The main results in our article were to show global existence and uniqueness of an adapted solution to \eqref{fbsde} in finite dimensional setting with the help of fundamental lemma and a new weighted maximum norm in the square-integrable measurable space. The key point in the proof of this lemma was to apply the extended martingale representation theorem and some inequalities from stochastic calculus;
		\item  We derived a mild solution of fBSDE and proved the coincidence between the notion of integral equations and mild solutions using the martingale representation theorem.
	\end{itemize}
Since our results are sufficiently new in the theory of BSDEs, there are still open problems to discuss related to fractional stochastic control theory and risk sensitive control problems. If one can consider the same statement of our results shown above, an interesting problem in Caputo fBSDE appears for multi-dimensional case, for example $m \geq 2$. The problem of existence with the coefficient $f$ only continuous in $(x,y) $ becomes very hard. A similar problem reveals in situation of the quadratic growth condition. Successfully applied techniques in the one-dimensional case fail here due to the lack of comparison theorems. In general, the multi-dimensional case has many interesting applications. 

\section*{Data availability statement}
Data will be made available on reasonable request.

\end{document}